
\documentclass[12pt,a4paper]{amsart}
\usepackage[latin1]{inputenc}
\usepackage[english]{babel}
\usepackage{amsmath,amstext,amsthm,amsfonts,amssymb,amscd}
\usepackage[dvips]{graphicx}
\usepackage{hyperref}
\usepackage{color}
\usepackage{cleveref}

\theoremstyle{plain}
\newtheorem{theorem}{Theorem}

\newtheorem{definition}{Definition}
\newtheorem{remark}{Remark}

\newtheorem{lemma}{Lemma}
\newtheorem{corollary}{Corollary}
\newtheorem{theorema}{Theorem}


\setlength{\hoffset}{-15mm}
\setlength{\voffset}{-20mm}
\setlength{\textheight}{23cm}
\setlength{\textwidth}{16cm}

\begin{document}
	
	\title[Equilibrium States for Random Zooming Systems]{Equilibrium States for Random Zooming Systems}

	\author[R. A. Bilbao]{Rafael A. Bilbao}
	\address{Rafael A. Bilbao, Universidad Pedag\'ogica y Tecnol\'ogica de Colombia, Avenida
		Central del Norte 39-115, Sede Central Tunja, Boyac\'a, 150003, Colombia. 
	}
	\email{rafael.alvarez@uptc.edu.co}
	\author[M. Oliveira]{Marlon Oliveira}
	\address{Marlon Oliveira, Departamento de Matem\'atica e Inform\'atica, Universidade Estadual do Maranh\~ao,  65055-310  S\~ao Lu\'is, Brazil} \email{marlon.cs.oliveira@gmail.com}
	
	\author[E. Santana]{Eduardo Santana}
	\address{Eduardo Santana, Universidade Federal de Alagoas, 57200-000 Penedo, Brazil}
	\email{jemsmath@gmail.com}
	
	\dedicatory{Dedicated to Paulo Ribenboim for his outstanding contribution to Mathematics}
	
	\thanks{The third author was partially supported by CNPq,   under the project with reference 409198/2021-8.} 
	
	\thanks{The authors have no competing interests to declare that are relevant to the content of this article.}
	
	\thanks{The data avaibility of this paper includes the preprint on arXiv:2401.09437}

	\date{\today}

	\maketitle
	
	\begin{abstract} 
		In this work, based on \cite{Pi1} for deterministic systems, we extend the notion of zooming systems to the random context and based on the technique of \cite{AMO} we prove the existence of equilibrium states for which we call random zooming potentials, that include the hyperbolic ones, possibly with the presence of a critical set. With a mild condition, we obtain uniqueness. As an example of existence, we have the so-called random Viana maps with critical points. We also prove that the classes of random zooming potentials and random hyperbolic potentials are equivalent and also contain the null potential, giving measures of maximal entropy.
	\end{abstract}

	\bigskip


	
	\section{Introduction}
	The theory of equilibrium states on dynamical systems was firstly developed by Sinai, Ruelle and Bowen in the sixties and seventies. 
	It was based on applications of techniques of Statistical Mechanics to smooth dynamics.
	Given a continuous map $f: M \to M$ on a compact metric space $M$ and a continuous potential $\phi : M \to \mathbb{R}$, an  {\textit{equilibrium state}} is an 
	invariant measure that satisfies a variational principle, that is, a measure $\mu$ such that
	$$
	\displaystyle h_{\mu}(f) + \int \phi d\mu = \sup_{\eta \in \mathcal{M}_{f}(M)} \bigg{\{} h_{\eta}(f) + \int \phi d\eta \bigg{\}},
	$$
	where $\mathcal{M}_{f}(M)$ is the set of $f$-invariant probabilities on $M$ and $h_{\eta}(f)$ is the so-called metric entropy of $\eta$.

	In the context of uniform hyperbolicity, which includes uniformly expanding maps, equilibrium states do exist and are unique if the potential is H\"older continuous
	and the map is transitive. In addition, the theory for finite shifts was developed and used to achieve the results for smooth dynamics.
	
	Beyond uniform hyperbolicity, the theory is still far from complete. It was studied by several authors, including Bruin, Keller, Demers, Li, Rivera-Letelier, Iommi and Todd 
	\cite{BK,IT1,IT2,LRL} for interval maps; Denker and Urbanski  \cite{DU} for rational maps; Leplaideur, Oliveira and Rios 
	\cite{LOR} for partially hyperbolic horseshoes; Buzzi, Sarig and Yuri \cite{BS,Y}, for countable Markov shifts and for piecewise expanding maps in one and higher dimensions. 
	For local diffeomorphisms with some kind of non-uniform expansion, there are results due to Oliveira \cite{O}; Arbieto, Matheus and Oliveira \cite{AMO};
	Varandas and Viana \cite{VV}. All of whom proved the existence and uniqueness of equilibrium states for potentials with low oscillation. Also, for this type of maps,
	Ramos and Viana  \cite{RV}  proved it for potentials so-called \textit{hyperbolic}, which includes the previous ones. The hyperbolicity of the potential is
	characterized by the fact that the pressure emanates from the hyperbolic region. In all these studies the maps do not have the presence of critical sets and recently, Alves, Oliveira and Santana proved the existence of at most finitely many equilibrium states for hyperbolic potentials, possible with the presence of a critical set (see \cite{AOS}). More recently, Santana generalized this result by for general zooming contractions, that is, beyond the context of exponential contractions in \cite{S}. Moreover, in this work it is proved that the class of hyperbolic potentials is equivalent to the class of continuous zooming potential (which satisfies a key inequality between free energies). It includes the null potential, which implies existence and uniqueness of measure of maximal entropy. A similar work is developed in \cite{PV}, where they deal with expanding measures and potentials.
	
	In the context of random dynamics, some works has established existence and others uniqueness. We can cite \cite{AMO,BR,K,SSV}. All these works deal with dynamics with absence of critical sets. In this work, we generalize some of these results and extend them to the context of zooming contractions, obtaining random zooming systems. Based on the definition of zooming maps, introduced in \cite{Pi1} and developed equilibrium states for the deterministic context in \cite{S}, we introduce random zooming systems.

	We define what we mean by random zooming systems (which generalizes both deterministic zooming systems and random non-uniformly expanding maps), random zooming potentials and follow ideas of \cite{AMO} to obtain equilibrium states. With a mild condition, we obtain uniqueness. Moreover, based on ideas of \cite{S} we prove that the class of random zooming potentials is equivalent to the class of random hyperbolic potentials and contain the null potential, giving measures of maximal entropy.
	
	In the Example section, the systems considered are classical in the literature but the novelty is the existence of equilibrium states. We give the class of random Viana maps and construct random zooming potentials for maps with the presence of a fixed point in the random zooming set, which is the case of the random Viana maps. Finally, we prove that for random systems which have equilibrium states that are not measures of maximal entropy at the same time, we have the null potential as a random zooming potential.
	We also present an example of random zooming system which is not random non-uniformly expanding.
	\section{Preliminaries and Main Result}

	\subsection{Random Maps and Invariant Measures}
	
	Let $M$ be a connected compact metric space and $\mathcal{C}^{0}:=\mathcal{C}^{0}(M,M)$ the space of continuous maps on $M$. Let $(\Omega,T,\mathbb{P})$ be a measure preserving system, where $T : \Omega \to \Omega$ is $\mathbb{P}$-invariant (where $\mathbb{P}$ is a Borel measure) and $\Omega$ is a Polish space, i.e., $\Omega$ is a complete separable metric space. By a random transformation we mean a continuos map $f: \Omega \to \mathcal{C}^0$. Then, we define the skew-product generated by $f$:
	$$  
	\begin{array}{cccc}
		F \ : & \! \Omega \times M & \! \longrightarrow
		& \! \Omega \times M \\
		& \! (w,x) & \! \longmapsto
		& \! (T(w),f_{w}(x)) 
	\end{array}
	$$
	where the iterates of $F$ are given by 
	
	$$
	F^{n}(w, x) = (T^{n}(w), f^{n}_{w}(x))
	$$
	with, 
	\[
	f^{n}_{w}(x):= f_{T^{n-1}(w)} \circ \dots \circ f_{w}(x).
	\]

	
	In this work, we aim to study the dynamics of the orbits $(f_w^n(x))$ for each $x\in M$ and $w\in \Omega$. In other words, we analyze the dynamics of $F$ on its fibers $\{w\}\times M:=M_w$, conditioned by the dynamics of the system $T$.
	
	We denote by $\mathcal{P}_{\mathbb{P}}(\Omega \times M)$ the space of probability measures $\mu$ on $\Omega \times M$ such that the marginal of $\mu$ on $\Omega$ is $\mathbb{P}$. Let $\mathcal{M}(F, \Omega \times M) \subset \mathcal{P}_{\mathbb{P}}(\Omega \times M)$ the space of probability measure $\mu$ that are $F$-invariant. 
	
	Because $M$ is compact, invariant measures always exist and the property of $\mathbb{P}$ being the
	marginal on $\Omega$ of an invariant measure can be characterized by its disintegration:
	\[
	\int_{\Omega\times M} G(w,x) d\mu(w,x) =\int_{\Omega} \int_{M} G_{w}(x) d\mu_{w}(x)d\mathbb{P}(w),	
	\]
	where $\mu_{w}$ are called sample measures of $\mu$ (see \cite{L}) and $G:\Omega\times M \rightarrow \mathbb{R}$ a measurable function with $G_w:M\rightarrow \mathbb{R}$ measurable for each $w\in \Omega$ defined as $G_{w}(x):= G(w,x)$.
	
	\subsection{Random Zooming Systems}
	
	Based on the definition of zooming maps, introduced in \cite{Pi1} and developed equilibrium states for the deterministic context in \cite{S}, we introduce random zooming systems. The notion of zooming captures and weakens the geometric aspects of the {\it hyperbolic times}, that is, for exponential contractions (see \cite{Pi1}, section 8), allowing more flexibility in the applications and examples with nonexponential contractions.
	
	\begin{definition}
		(Zooming contractions). A \textit{zooming contraction} is a sequence of functions $\alpha_{n}: [0,+\infty) \to [0,+\infty)$ such that
		
		\begin{itemize}
			\item $\alpha_{n}(r) < r, \text{for all} \, \, n \in \mathbb{N}, \text{for all} \, \, r>0.$
			
			\item $\alpha_{n}(r)<\alpha_{n}(s), \,\, if \,\, 0<r<s, \text{for all} \, \, n \in \mathbb{N}$.
			
			\item $\alpha_{m} \circ \alpha_{n}(r) \leq \alpha_{m+n}(r), \text{for all} \, \, r>0, \text{for all} \, \, m,n \in \mathbb{N}$.
			
			\item $\displaystyle \sup_{r \in (0,1)} \sum_{n=1}^{\infty}\alpha_{n}(r) < \infty$.
		\end{itemize}
		
	\end{definition}
	
	As defined in \cite{PV}, we call the contraction $(\alpha_{n})_{n}$ \textit{exponential} if $\alpha_{n}(r) = e^{-\lambda n} r$ for some $\lambda > 0$ and \textit{Lipschitz} if $\alpha_{n}(r) = a_{n} r$ with $0 \leq a_{n} < 1, a_{m}a_{n} \leq a_{m+n}$ and $\sum_{n=1}^{\infty} a_{n} < \infty$. In particular, every exponential contraction is Lipschitz. We can also have the example with $a_{n} = (n+b)^{-a}, a > 1, b>0$.
	
	We define a random $(\alpha,\delta)$-{\it zooming time} for $(w,x) \in \Omega \times M$. Fix a zooming contraction $\alpha=\{\alpha_n\}$, $\delta > 0$ and $(w,x) \in \Omega \times M$, we say that $n \in \mathbb{N}$ is a $(\alpha, \delta)$-{\textit{random zooming time}} for $(w,x)$ if
	\begin{itemize}
		\item [(i)] There exists a neighbourhood $V_{n}(w,x) \subset \{w\} \times M$ of $(w,x)$ such that $f^{n}_{w}$ sends  $ V_{n}(w,x)$ homeomorphically onto the ball $ B_{\delta}(f^{n}_{w}(x)) \subset \{T^{n}(w)\} \times M$;
		\item [(ii)] $d(f^{i}_{w}(y),f^{i}_{w}(z)) \leq \alpha_{n-i}(d(f^{n}_{w}(y),f^{n}_{w}(z))), \forall \,\, y,z \in V_{n}(w,x), \forall \,\, 0 \leq i \leq n-1.$
	\end{itemize}
	
	We require that it holds for $\mathbb{P}$ almost every $w \in \Omega$. 
	
	The sets $V_{n}(w,x)$ are called \textit{random zooming pre-balls} and their images $f^{n}_{w}(V_{n}(w,x)) = B_{\delta}(f^{n}_{w}(x))$,  \textit{random zooming balls}.
	
	We observe that if $n$ is a $(\alpha,\delta)$-random zooming time for $(w,x)$, then $n$ is a $(\alpha,\delta')$-random zooming time for $(w,x)$, for every $0 < \delta' < \delta$.
	
	We say that $(w,x) \in \Omega \times M$ has {\textit{positive frequency}} of random zooming times if
	\begin{equation}
		\limsup_{n \to \infty}\frac{1}{n}\# \bigg\{0 \leq j \leq n-1 \,\, | \,\, j \,\, \text{is a random zooming time for} \,\, (w,x)\bigg\} > 0,
	\end{equation}\label{frequency}
	and define the {\textit{random zooming set}}
	$$
	\Lambda = \{(w,x) \in \Omega \times M \mid \text{the frequency of} \,\, (\alpha,\delta)- \text{random zooming times of} \,\, (w,x) \,\, \text{is positive}\}.
	$$
	We say that a Borel probability  measure $\mu$ on $\Omega \times M$ is {\textit{random zooming}} if $\mu(\Lambda)=1$.

	If the system $F:\Omega\times M \rightarrow \Omega \times M$, satisfies conditions (i) and (ii) and there exists a Borel probability measure $\mu$ random zooming, then $F$ it is called a {\it random zooming system}. Since the definition of zooming is a generalization of the definition of hyperbolic times, the existence of such systems is ensured.
	
	\begin{remark} We observe that for $\Omega=\{w\}$ the definition of random zooming system reduce to the notions of zooming system for deterministic cases as in \cite{Pi1}.
	\end{remark}

	\subsection{Entropy and Topological Pressure}
	We start by the definition of entropy and topological pressure for random transformations. The reader can consult more results and properties in Kifer~\cite{K} and Liu~\cite{L}.

	Let $\mu\in \mathcal{M}(F, \Omega\times M)$ be an $F$-invariant measure. Given a finite measurable partition $\xi$ of $M$ we set 
	$$h_{\mu}(f, \xi):=\lim_{n\rightarrow +\infty} \frac{1}{n} \int_\Omega H_{\mu_w}\left(\bigvee_{j=0}^{n-1}(f_{w}^{j})^{-1}(\xi)\right) d\mathbb{P}(w)$$
	where $H_{\nu}(\xi)=-\sum_{P\in \xi}\nu(P)\log\nu(P)$ for a finite partition $\xi$ and $\mu_w$ is the sample measure of $\mu.$
	The \emph{entropy} of $(f, \mu)$ is $$h_{\mu}(f):=\sup_{\xi}\{h_{\mu}(f,\xi)\}
	$$
	where the supremum is taken over all finite measurable partitions of $M$.
	
	Denote by $\mathbb{L}^{1}_{\mathbb{P}}(\Omega,C^0(M))$ the space of all measurable functions  $\phi:\Omega\times M\rightarrow \mathbb{R}$ such that $\phi_{w}:M\rightarrow \mathbb{R}$ defined by $\phi_{w}(x):=\phi(w,x)$ is continuous for all $w\in \Omega$ and $\lVert \phi \rVert_{1}=\int_{\Omega}\lVert \phi_{w}\rVert_{\infty}\,d\mathbb{P}(w)<+\infty$. 
	
	Fix $w\in \Omega$. Given $\varepsilon >0$ and an integer $n\geq 1$, we say that a subset $F_n\subseteq M$ is $(w,n,\varepsilon)-$separated if for every two distinct points $y,z\in F_n$ there exists some $j\in \{0,1,...,n-1\}$ such that $d(f^{j}_{w}(y),f^{j}_{w}(z))>\varepsilon$. 
	
	For $\phi \in  \mathbb{L}^{1}_{\mathbb{P}}(\Omega,C^0(M))$, $\varepsilon >0$ and $n\geq 1$ we consider 
	$$P_{f}(\phi)(w, n, \varepsilon)= \sup\left\{\sum_{y\in F_n}e^{ S_{n}\phi(w,y)} \ ;\ F_n\  \mbox{is a}\ (w,n,\varepsilon)-\mbox{separated set}\right\}$$
	where $S_{n}\phi(w,y):=\sum^{n-1}_{j=0}\phi_{\theta^{j}(w)}( f^{j}_{w}(y))$.
	
	The \emph{random topological pressure} of $\phi$ relative to $\theta$ is defined by	 
	$$P_{f}(\phi)=\lim_{\varepsilon\rightarrow 0}\limsup_{n\rightarrow \infty} \frac{1}{n}\int_{\Omega}\log P_{f}(\phi)(w, n, \varepsilon)\, d\mathbb{P}(w).$$
	Thus it is well defined the pressure map as follows
	$$
	\begin{array}{cccc}
		P_{f}\ : & \! \mathbb{L}^{1}_{\mathbb{P}}(\Omega,C^0(M)) & \! \longrightarrow
		& \!\mathbb{R}\cup\{\infty\} \\
		& \! \phi & \! \longmapsto	 & \! P_{f}(\phi)
	\end{array}
	$$ 
	In particular, the topological entropy of $F$ relative to $T$ is $h_{top}(f)=P_{f}(0)$.	
	
	The topological pressure and the entropy are related by the well known Variational Principle. The reader can see a proof of this result in \cite{L}.  
	
	\begin{theorem}	[Variational Principle]\label{thprinvaria}
		Assume that $(\Omega, \mathbb{P})$ is a Lebesgue space. Then for any $\phi\in \mathbb{L}^{1}_{\mathbb{P}}(\Omega,C^0(M))$ we have
		\begin{equation} 	\label{eqprinvaria}
			P_{f}(\phi)=\sup_{\mu\in \mathcal{M}_{\mathbb{P}}(F)}\biggl(h_{\mu}(f)+\int\phi\ d\mu \biggr).
		\end{equation}
		Moreover, when $\mathbb{P}$ is ergodic, we can consider the supremum over ergodic measures. 
	\end{theorem}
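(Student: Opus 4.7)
The plan is to establish the two inequalities that together yield the Variational Principle, adapting the classical Misiurewicz argument (see \cite{K}, \cite{L}) to the random fibered setting. The main technical care is to work with the sample measures $\mu_w$ given by Rokhlin's disintegration and to check measurability of all constructions in the base variable $w \in \Omega$; the Lebesgue assumption on $(\Omega,\mathbb{P})$ is what makes the required measurable selections available.

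For the upper bound $h_\mu(F|T) + \int \phi\, d\mu \le P_{F|T}(\phi)$, I would fix $\mu \in \mathcal{M}_{\mathbb{P}}(F)$, choose a finite measurable partition $\xi$ of $M$ of diameter less than $\varepsilon$ with $\mu$-negligible boundary, and write $\xi_n^w := \bigvee_{j=0}^{n-1} f_w^{-j}\xi$. For each atom $P \in \xi_n^w$ I pick a point $y_P$ approximating the supremum of $S_n\phi(w,\cdot)$ on $P$ and let $F_n(w) := \{y_P\}$. A Jensen-type inequality then gives
$$H_{\mu_w}(\xi_n^w) + \int S_n\phi(w,\cdot)\, d\mu_w \le \log \sum_{y \in F_n(w)} e^{S_n\phi(w,y)}.$$
The collection $F_n(w)$ need not be genuinely $(w,n,\varepsilon)$-separated, but can be partitioned into at most $C^n$ such subsets where $C$ depends only on $M$ and $\xi$; this introduces a correction of order $\log C$ per step that is absorbed when $\operatorname{diam}\xi\to 0$. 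Integrating in $w$ with respect to $\mathbb{P}$, dividing by $n$, and passing to the limsup yields the desired bound.

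For the lower bound, I fix $\varepsilon>0$ and, by measurable selection in the Lebesgue space $(\Omega,\mathbb{P})$, choose for each $n$ a nearly maximizing $(w,n,\varepsilon)$-separated set $E_n(w)$. I form the fibered empirical measures
$$\sigma_n(w) = \frac{\sum_{y \in E_n(w)} e^{S_n\phi(w,y)}\delta_y}{\sum_{y \in E_n(w)} e^{S_n\phi(w,y)}}, \qquad \mu_n(w) = \frac{1}{n}\sum_{k=0}^{n-1}(f^k_w)_*\sigma_n(w),$$
and set $d\nu_n(w,x) = d\mu_n(w)(x)\, d\mathbb{P}(w)$. Extracting a weak-$*$ limit $\nu$ along a subsequence, I verify that $\nu$ has marginal $\mathbb{P}$ on $\Omega$ (by testing against functions depending only on $w$) and that $\nu$ is $F$-invariant (the Ces\`aro averaging cancels the skew-product coboundary in the limit). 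A fiberwise Misiurewicz partition-entropy estimate, applied with partitions whose boundary is $\nu$-negligible, then gives $h_\nu(F|T) + \int\phi\,d\nu \ge P_{F|T}(\phi) - \eta$ for arbitrary $\eta>0$.

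The step I expect to be the main obstacle is the fibered weak-$*$ compactness and the measurability bookkeeping: one must show that the near-maximal separated sets $E_n(w)$ can be chosen jointly measurably in $(w,n)$, that the map $w \mapsto \mu_n(w)$ is measurable into $\mathcal{P}(M)$, and that any weak-$*$ limit $\nu$ retains $\mathbb{P}$ as its marginal rather than escaping to some compactified extension. Once these technical points are settled, the remainder of the argument is standard; the refinement to ergodic $\mathbb{P}$ follows via the ergodic decomposition of $\nu$ over its $\mathbb{P}$-marginal, which preserves the value of $h_\nu(F|T) + \int\phi\,d\nu$ by affinity.
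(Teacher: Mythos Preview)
The paper does not actually prove this theorem: immediately after the statement it writes ``The reader can see a proof of this result in \cite{L}'' and moves on, so there is no in-paper argument to compare against. Your outline is the standard Misiurewicz-type proof adapted to the random fibered setting, which is precisely the route taken in the cited references \cite{K} and \cite{L}; at the level of a sketch it is correct and matches those sources. The measurability and selection issues you flag as the main obstacle are exactly the points where \cite{L} does the real work, so if you want a self-contained write-up you should consult that reference for the details rather than reinvent them.
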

	
	Motivated by the Variational Principle, we say that an $F$-invariant measure $\mu\in \mathcal{M}(\Omega\times M)$ is an \emph{equilibrium state} for $(f, \phi)$ relative to $T$ if the supremum (\ref{eqprinvaria}) is attained by $\mu$, i.e.,
	\[
	P_{f}(\phi)=h_{\mu}(f)+\int\phi \ d\mu.
	\]

	\subsection{Topological Pressure and Variational Principle to noncompact set}
	
	Let $w\in \Omega$ and consider $\mathcal{U}$ a finite open cover of $M$. Denote by $\mathcal{S}_n(\mathcal{U})$ the set of all strings ${\bf{U}}=\{U_{i_0}, \cdots, U_{i_{n-1}}\,;\, U_{i_j}\in \mathcal{U}\}$ of length $n=n({\bf{U}})$ and put $\mathcal{S}=\mathcal{S}(\mathcal{U})=\bigcup_{n\geq 0}\mathcal{S}_n(\mathcal{U})$.
	Given a string ${\bf{U}}=\{U_{i_0}, \cdots, U_{i_{n-1}}\}\in\mathcal{S}(\mathcal{U})$,  we consider the cylinder 
	$$X_w=X_{w}({\bf{U}}):=\{x\in M\,\, ;\,\,  f^{j}_w(x)\in U_{i_j}\,\, \mbox{for}\,\, j=0, \cdots, n({\bf U})-1\}.$$
	Let $\mathcal{F}_{(N, w)}$ be the collection of all cylinders of depth at least $N$,  i.e., 
	$$\mathcal{F}_{(N, w)}=\mathcal{F}_{(N, w)}(\mathcal{U})=\{X_{w}({\bf{U}})\,\, ;\,\, {\bf{U}}\in \mathcal{S}_n(\mathcal{U})\,\, \mbox{for}\,\, n\geq N \}.$$
	For $\beta\in \mathbb{R}$ and $\phi \in  \mathbb{L}^{1}_{\mathbb{P}}(\Omega,C^0(M))$ let
	\begin{equation}\label{mensuravel}
		m_{\beta}(w, \phi, f, \mathcal{U}, N)=\displaystyle{\inf_{\mathcal{F}}}\left\{ \sum_{X_{w}\in\mathcal{F}_{(N, w)}} e^{-\beta n({\bf{U}}) + S_{n({\bf{U}})}\phi(X_w)}  \right\},
	\end{equation}
	where $S_{n({\bf{U}})}\phi(X_w)=\sup_{y\in X_w}\sum_{j=0}^{n({\bf U})-1}\phi_{T^{j}(w)}(f^{j}_{w}(y))$ and the infimum is taken over all finite families $\mathcal{F}$ of $\mathcal{F}_{(N, w)}$ in order that (\ref{mensuravel}) is measurable in $w$ (see e.g. Section~9 of \cite{SSV}). As $N$ goes to infinity we define 
	\[
	m_{\beta}(w, \phi, f, \mathcal{U})=\lim_{N\to \infty}m_{\beta}(w, \phi, f, \mathcal{U}, N).
	\]
	The existence of the limit above is guaranteed by the function $m_{\beta}(w, \phi, f, \mathcal{U}, N)$ to be increasing with $N$. Taking the infimum over $\beta$ we call
	\[
	P_{f}(w, \phi, \mathcal {U})=\inf\{\beta: m_{\beta}(w, \phi, f, \mathcal{U})=0 \}.
	\]
	Let $|\mathcal{U}|=\max\{\mbox{diam} U_i\,\,;\,\,U_i\subset\mathcal{U}\}$ be the diameter of the cover $\mathcal{U}$ and consider 
	$$P_{f} (w, \phi)=\lim_{|\mathcal{U}|\rightarrow 0} {P_{f} (w, \phi, \mathcal{U})}.$$
	In [Theorem~11.1, \cite{Pe2}] it was showed that this quantity is well defined and does not depend on the cover $\mathcal{U}$. Moreover, since all quantities defined above are measurable functions of $w\in \Omega$ (see e.g.~Section~9 of \cite{SSV}), we can define the \emph{random topological pressure} of $(f, \phi)$ as
	\[
	P_{f} (\phi)= \int P_{f} (w, \phi)\ d\mathbb{P}(w). 
	\]	
	
	In the following we present another way to define the random topological pressure. We fix $w\in \Omega$ and $\varepsilon>0$. For $n\in \mathbb{N}$, $x\in M$, let $B_{w}(x,n,\varepsilon)$ be the dynamic ball
	\[
	B_{w}(x,n,\varepsilon):=\{y\in M: d(f_{w}^{j}(x),f_{w}^{j}(y) )< \varepsilon, \ \mbox{ for} \ \ 0\leq j \leq n \}.
	\]
	We denote by $G_{(N,w)}$ the collection of dynamic balls:
	\[
	G_{(N,w)}:=\{ B_{w}(x,n,\varepsilon): x\in M \ \mbox{and} \ n\ge N \}.
	\]
	Let $U_w$ be a finite or countable family of $G_{(N,w)}$ which covers $M$. For every $\beta\in \mathbb{R}$ and $\phi \in  \mathbb{L}^{1}_{\mathbb{P}}(\Omega,C^0(M))$ let
	\[
	m_{\beta}(w, \phi, f, \varepsilon, N)=\inf_{U_{w}\subset G_{(N,w)}}\left\{ \sum_{B_{w}(x,n,\varepsilon)\in U_w} e^{-\beta n + S_{n}\phi(B_{w}(x,n,\varepsilon))}  \right\},
	\]
	where $S_{n}\phi(B_{w}(x,n,\varepsilon))=\sup_{y\in B_{w}(x,n,\varepsilon)}\sum_{j=0}^{n-1}\phi_{T^{j}(w)}(f^{j}_{w}(y))$. When $N$ goes to infinity we consider
	\[
	m_{\beta}(w, \phi, f, \varepsilon)=\lim_{N\to \infty}m_{\beta}(w, \phi, f, \varepsilon, N).
	\]
	and taking the infimum over $\beta$ we define
	\[
	P_{f}(w, \phi, \varepsilon)=\inf\{\beta: m_{\beta}(w, \phi, f, \varepsilon)=0 \}.
	\]
	Since $P_{f}(w, \phi, \varepsilon)$ is decreasing on $\varepsilon$ we can take the limit	$$P_{f} (w, \phi)=\lim_{\varepsilon\rightarrow 0} {P_{f} (w, \phi, \varepsilon)}.$$
	Now if we consider a finite open cover $\mathcal{U}$ of $M$ with Lebesgue number $\varepsilon(\mathcal{U})$ we have 
	$$B_w(x, n({\bf U}), \frac{1}{2}\varepsilon)\subset X_w({\bf U})\subset B_w(x, n({\bf U}), 2|\mathcal {U}|)$$
	which implies that $$P_{f} (w, \phi)=\lim_{\varepsilon\rightarrow 0} {P_{f} (w, \phi, \varepsilon)}=\lim_{|\mathcal{U}|\rightarrow 0} {P_{f} (w, \phi, \mathcal{U})}.$$
	Therefore the definition of random topological pressure via coverings and via dynamic balls coincides.
	
	The topological pressure and the entropy are related by the well known Variational Principle. The reader can see a proof of this result in \cite{SSV}.
	
	\begin{theorem}{(Variational Principle to noncompact sets).}\label{VP}
		Let $\phi\in \mathbb{L}^{1}_{\mathbb{P}}(\Omega,C^0(M))$ and let $\mu\in \mathcal{M}(\Omega\times M)$ be a probability measure  and $\Lambda \subset \Omega \times M$ such that $F(\Lambda)\subseteq \Lambda$. Then
		$$
		P_{f}(\phi,\Lambda) \geq \sup_{\mu(\Lambda)=1} \left\{ h_{\mu}(f) + \int_{\Lambda} \phi d\mu  \right\}.
		$$
	\end{theorem}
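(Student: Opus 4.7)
The plan is to fix an arbitrary $F$-invariant probability $\mu$ with $\mu(\Lambda)=1$ and show that $h_\mu(F|T)+\int\phi\,d\mu \le P_{F|T}(\phi,\Lambda)$; taking the supremum on the left then yields the statement. Using the ergodic decomposition of $\mu$ (each ergodic component still satisfies $\mu_\tau(\Lambda)=1$ because $\Lambda$ is forward invariant, and still projects to $\mathbb{P}$ over an ergodic base), together with affinity of $\mu\mapsto h_\mu(F|T)+\int\phi\,d\mu$, I may assume that $\mu$ is $F$-ergodic.

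After disintegrating $\mu=\int\mu_w\,d\mathbb{P}(w)$, for any $\eta>0$ I would use three classical ingredients to produce pointwise estimates. First, the Shannon--McMillan--Breiman theorem for random dynamical systems (see Kifer \cite{K} or Liu \cite{L}) furnishes, for $\varepsilon>0$ small, a measurable set $\Omega_\eta\subset\Omega$ with $\mathbb{P}(\Omega_\eta)>1-\eta$ and, for every $w\in\Omega_\eta$, a set $A_w\subset M$ with $\mu_w(A_w\cap \Lambda_w)>1-\eta$ and an integer $N_0(w)$ such that
\[
\mu_w(B_w(x,n,\varepsilon))\le e^{-n(h_\mu(F|T)-\eta)}\qquad\forall\, x\in A_w,\ n\ge N_0(w).
\]
Second, Birkhoff's theorem applied to $(F,\mu)$ gives $\tfrac{1}{n}S_n\phi(w,x)\to\int\phi\,d\mu$ $\mu$-almost everywhere; after further shrinking $A_w$ and enlarging $N_0(w)$ we have $S_n\phi(w,x)\ge n(\int\phi\,d\mu-\eta)$ for $x\in A_w$, $n\ge N_0(w)$. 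Third, an Egorov/Lusin argument applied to $w\mapsto\phi_w$, combined again with Birkhoff, allows one to take $\varepsilon$ small enough that $S_n\phi(B_w(x,n,\varepsilon))\ge S_n\phi(w,x)-n\eta$ for the same $x,n$.

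Now fix $w\in\Omega_\eta$, $N\ge N_0(w)$, and let $U_w\subset G_{(N,w)}$ be any admissible family covering $\Lambda_w:=\{x\in M:(w,x)\in\Lambda\}$. Restricting $U_w$ to the subfamily of balls meeting $A_w\cap\Lambda_w$ still covers a set of $\mu_w$-measure at least $1-\eta$, so $\sum_i \mu_w(B_w(x_i,n_i,\varepsilon))\ge 1-\eta$ over the surviving balls. Combining the three estimates at any such ball gives
\[
e^{-\beta n_i+S_{n_i}\phi(B_w(x_i,n_i,\varepsilon))} \ge \mu_w(B_w(x_i,n_i,\varepsilon))\cdot e^{n_i(h_\mu(F|T)+\int\phi\,d\mu-\beta-3\eta)}.
\]
For any $\beta<h_\mu(F|T)+\int\phi\,d\mu-3\eta$ the exponential factor is $\ge 1$, so summing in $i$ yields a lower bound $\ge 1-\eta>0$, uniformly over the choice of $U_w$. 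Hence $m_\beta(w,\phi,F|T,\varepsilon)\ge 1-\eta$, so $P_{F|T}(w,\phi,\varepsilon)\ge\beta$, and letting $\varepsilon\to 0$ we obtain $P_{F|T}(w,\phi)\ge\beta$ for every $w\in\Omega_\eta$. Integrating over $\Omega$, with the trivial bound $P_{F|T}(w,\phi)\ge -\|\phi_w\|_\infty$ on $\Omega\setminus\Omega_\eta$ (integrable since $\phi\in\mathbb{L}^{1}_{\mathbb{P}}(\Omega,C^0(M))$), and sending $\eta\to 0$ yields the desired inequality.

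I expect the main obstacle to be aligning the a.e.-in-$w$ nature of the Shannon--McMillan--Breiman and Birkhoff estimates with the measurability requirement imposed on the admissible families $\mathcal{F}$ (equivalently $U_w$) that enter the definition of $m_\beta$; this forces one to work within the measurable selection framework of Section~9 of \cite{SSV}, so that the pointwise inequality $P_{F|T}(w,\phi)\ge\beta$ is actually a measurable statement that can be integrated in $w$. Secondary technicalities are verifying that the ergodic decomposition preserves the skew-product structure over $(\Omega,T,\mathbb{P})$, and the Egorov/Lusin argument needed to pass from the pointwise Birkhoff estimate on $S_n\phi(w,x)$ to the supremum $S_n\phi(B_w(x,n,\varepsilon))$ appearing in the definition of $m_\beta$.
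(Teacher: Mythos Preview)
The paper does not supply its own proof of this theorem; it simply records the statement and refers the reader to \cite{SSV} (``The reader can see a proof of this result in \cite{SSV}''). Your sketch is the standard Pesin--Pitskel argument (cf.\ \cite{Pe2}) transported to the random setting, which is exactly the route taken in \cite{SSV}: reduce to ergodic $\mu$, combine a random Brin--Katok/SMB upper bound on $\mu_w(B_w(x,n,\varepsilon))$ with Birkhoff for $S_n\phi$, and conclude that for $\beta<h_\mu(F|T)+\int\phi\,d\mu$ every admissible cover of $\Lambda_w$ by deep dynamical balls has $m_\beta$-mass bounded below by a positive constant, whence $P_{F|T}(w,\phi)\ge\beta$ on a set of $w$'s of $\mathbb{P}$-measure close to $1$.

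Two small corrections to the sketch. First, the bound on $\mu_w(B_w(x,n,\varepsilon))$ is the random Brin--Katok theorem rather than SMB proper; this is what \cite{SSV} actually invokes. Second, your ``third ingredient'' is unnecessary as stated: $S_n\phi(B_w(x,n,\varepsilon))$ is a supremum over the ball, so it automatically dominates $S_n\phi(w,y)$ for any $y$ in the ball, in particular for a point $y\in A_w$ that the ball meets. The genuine replacement step you need (and did not write) is when the \emph{center} $x_i$ of a covering ball lies outside $A_w$: pick $y\in A_w\cap B_w(x_i,n_i,\varepsilon)$ and use $B_w(x_i,n_i,\varepsilon)\subset B_w(y,n_i,2\varepsilon)$ to transfer both the measure and the Birkhoff estimates to $y$. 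The measurability and ergodic-decomposition issues you flag are real but are exactly the points handled in Section~9 of \cite{SSV}, so your cautions are well placed.
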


	\subsection{Random Zooming Potentials}
	
	Analogouly to what is given in \cite{PV}, we introduce the following definitions in the random context.
	
	Let $\mathcal{Z}$ be the set of random zooming measures on $\Omega \times M$. We say that a function $\phi  \in \mathbb{L}^{1}_{\mathbb{P}}(\Omega,C^0(M))$ is a \emph{random zooming potential} if the following inequality holds:
	\begin{equation}
		\label{equaexpandpoten1}
		\displaystyle \sup_{\nu \in \mathcal{Z}^{c}}\bigg{\{} h_{\nu}(f) + \int \phi d\nu \bigg{\}} < \sup_{\mu \in \mathcal{Z}}\bigg{\{} h_{\mu}(f) + \int \phi d\mu \bigg{\}}. 
	\end{equation}
	
	If $P(\phi)$ denotes the topological pressure of $\phi$, by the Variational Principle, we have:
	\[
	P(\phi) = \sup_{\mu \in \mathcal{M}(\Omega \times M)}\bigg{\{} h_{\mu}(f) + \int \phi d\mu \bigg{\}} = \sup_{\mu \in \mathcal{Z}}\bigg{\{} h_{\mu}(f) + \int \phi d\mu \bigg{\}}. 
	\]
	
	It means that, if the there exists some equilibrium state for a random zooming potential, the equilibrium states must be random zooming measures.

	\subsection{Random Hyperbolic Potentials}
	
	We say that a function $\phi  \in \mathbb{L}^{1}_{\mathbb{P}}(\Omega,C^0(M))$ is a \emph{random hyperbolic potential} if the topological pressure $P_{f}(\phi)$ is concentrated at the random zooming set $\Lambda$, that is, if the following inequality holds:
	\begin{equation}
		\label{equationradhyperpoten}
		P_{f}(\phi,\Lambda^{c}) < P_{f}(\phi,\Lambda) = P_{f}(\phi).
	\end{equation}
	As a consequence of the Theorem \ref{VP} (Variational Principle), we have the following inequality
	\[
	\displaystyle \sup_{\nu \in \mathcal{Z}^{c}}\bigg{\{} h_{\nu}(f) + \int \phi d\nu \bigg{\}} \leq P_{f}(\phi,\Lambda^{c}) < P_{f}(\phi,\Lambda) = P_{f}(\phi) = \sup_{\mu \in \mathcal{Z}}\bigg{\{} h_{\mu}(f) + \int \phi d\mu \bigg{\}}. 
	\]
	It means that every random hyperbolic potential is a random zooming potential.
	
	\subsection{Main Result}
	
	Our main result concerns the existence and uniqueness of equilibrium states for random zooming systems.
	
	\begin{theorema}
		\label{A}
		Let $F:\Omega \times M \to \Omega \times M$ be a random zooming map and $\phi : \Omega \times M \to \mathbb{R}$ a random zooming potential. Then,
		
		\begin{itemize}
			\item There exists some equilibrium state and every equilibrium state is a random zooming measure.
			
			\item Suppose that the marginal $\mathbb{P}$ is ergodic and the map $F : \Omega \times M \to \Omega \times M$ is such that the set $\{f^{n}_{w}(x) \,\, | \,\, n \in \mathbb{N}\}$ is dense on $M$, for every $(w,x) \in \Lambda \cap M_{w}$,  $\mathbb{P}$-a.e. $w \in \Omega$. In this case, all ergodic equilibrium states have same support $S$.
			
			\item If there exists a fixed point $y_{0} = (w_{0}, x_{0}) \in \Lambda \cap S$, we have uniqueness of equilibrium states.
		\end{itemize}
	\end{theorema}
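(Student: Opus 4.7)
The plan is to adapt the variational method of \cite{AMO} combined with the zooming-set technology of \cite{S} to the random fibered setting. First I would choose a maximizing sequence $(\mu_k)\subset \mathcal{M}(F,\Omega\times M)$ with $h_{\mu_k}(F|T)+\int\phi\,d\mu_k\to P_{F|T}(\phi)$. The random zooming hypothesis (\ref{equaexpandpoten1}) prevents mass from escaping into $\Lambda^c$: if $\mu_k(\Lambda)\to 0$, writing $\mu_k$ as the convex combination of its normalized restrictions to $\Lambda$ and $\Lambda^c$ and using affinity of the free energy in $\mu$ would force $P_{F|T}(\phi)\leq \sup_{\nu\in\mathcal{Z}^c}\{h_\nu(F|T)+\int\phi\,d\nu\}$, contradicting (\ref{equaexpandpoten1}). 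Hence a uniform lower bound $\mu_k(\Lambda)\geq c>0$ holds for all large $k$.

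Next I would extract a weak-$\ast$ accumulation point $\mu$. Because $\mathbb{P}$ is fixed on the $\Omega$-factor, standard skew-product arguments ensure $F$-invariance and the correct marginal. To transfer the lower bound on $\mu_k(\Lambda)$ to $\mu$, I would write $\Lambda$ as the increasing union over $\theta>0$ of sets $\Lambda_\theta$ of points whose $(\alpha,\delta)$-zooming frequency is at least $\theta$; each $\Lambda_\theta$ is characterized by an upper-semicontinuous condition, hence its mass is preserved under weak-$\ast$ limits, giving $\mu(\Lambda)\geq c>0$. Replacing $\mu$ by its normalized restriction to $\Lambda$ (which is $F$-invariant and can only increase the free energy) yields an $F$-invariant random zooming candidate.

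For the equilibrium property I need upper semicontinuity of entropy along the sequence. The zooming structure furnishes fiberwise expansiveness on $\Lambda$: at every zooming time the pre-ball $V_n(w,x)$ has diameter uniformly controlled by the contraction $\alpha$. Adapting the Newhouse/Misiurewicz argument to the random context along the lines of \cite{AMO}, one obtains $\limsup_k h_{\mu_k}(F|T)\leq h_\mu(F|T)$, and since $\phi\in\mathbb{L}^{1}_{\mathbb{P}}(\Omega,C^0(M))$ we have $\int\phi\,d\mu_k\to\int\phi\,d\mu$. Thus $\mu$ attains $P_{F|T}(\phi)$. The zooming-potential inequality (\ref{equaexpandpoten1}) further forces every equilibrium state to lie in $\mathcal{Z}$, since any measure in $\mathcal{Z}^{c}$ has strictly smaller free energy than $P_{F|T}(\phi)$.

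For uniqueness, assume two distinct ergodic equilibrium states $\mu_1\neq \mu_2$; by the first part both belong to $\mathcal{Z}$. The density hypothesis on orbits through $\Lambda$, together with ergodicity and full support of $\mathbb{P}$, implies that every ergodic random zooming measure positively charges each random zooming ball $\{w\}\times B_\delta(x_0)$. Building the random induced system via the first $(\alpha,\delta)$-zooming return to such a ball produces an expanding random Markov tower as in \cite{AMO,S}; applying the uniqueness statement for the induced potential there and invoking the Kac-type lifting formula, both $\mu_1$ and $\mu_2$ project from the same induced equilibrium state, so $\mu_1=\mu_2$. The main obstacle will be proving the upper semicontinuity of entropy in the third step on the noncompact zooming set within a fibered random framework, and checking joint measurability in $w$ of the induced tower and its canonical partition in the uniqueness step.
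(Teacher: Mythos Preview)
Your existence argument contains a real gap and is more complicated than necessary. You take the maximizing sequence $(\mu_k)$ from all of $\mathcal{M}(F,\Omega\times M)$ and then fight to keep mass on $\Lambda$ through the weak-$\ast$ limit. But the very definition of a random zooming potential already gives
\[
P_{F|T}(\phi)=\sup_{\mu\in\mathcal{Z}}\Bigl\{h_\mu(F|T)+\int\phi\,d\mu\Bigr\},
\]
so one may (and the paper does) choose $(\mu_k)\subset\mathcal{Z}$ from the start. Your detour creates two concrete problems. First, the claim that each $\Lambda_\theta$ is cut out by an upper-semicontinuous condition and that its mass survives weak-$\ast$ limits is unjustified: being a zooming time is not an open condition in $(w,x)$, the sets $\Lambda_\theta$ need not be closed, and even for closed $C$ Portmanteau yields $\limsup_k\mu_k(C)\le\mu(C)$, which is the wrong inequality for a lower bound on $\mu(\Lambda)$. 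Second, and more damaging, the entropy upper-semicontinuity you invoke is obtained through a fixed finite partition $\mathcal{P}$ of diameter $<\delta$ via the identity $h_{\mu_k}(F|T)=h_{\mu_k}(F,\mathcal{P})$; this Kolmogorov--Sinai reduction holds only when $\mathcal{P}$ is generating for $\mu_k$, and the zooming expansiveness lemma delivers that precisely when $\mu_k(\Lambda)=1$. With merely $\mu_k(\Lambda)\ge c$, the reduction fails and you have no control of $h_{\mu_k}(F|T)$ along the sequence. Once $(\mu_k)\subset\mathcal{Z}$, the paper's proof is short: $\mathcal{P}$ is generating for every $\mu_k$, the map $\eta\mapsto h_\eta(F,\mathcal{P})$ is upper semicontinuous at any accumulation point $\mu$ with $(\mu)_w(\partial\mathcal{P})=0$, so $\mu$ is an equilibrium state; it then lies in $\mathcal{Z}$ a posteriori because every $\nu\in\mathcal{Z}^c$ has free energy strictly below $P_{F|T}(\phi)$.

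For uniqueness your route is genuinely different from the paper's. The paper builds no induced tower and invokes no Kac formula: using the orbit-density hypothesis together with ergodicity and full support of $\mathbb{P}$, it shows directly that for any two ergodic equilibrium states $\mu,\nu\in\mathcal{Z}$ one has $\mu_w(B_0)>0$ and $\nu_w(B_0)>0$ for every ball $B_0\subset M$ and $\mathbb{P}$-a.e.\ $w$, hence $\operatorname{supp}\mu=\operatorname{supp}\nu=\Omega\times M$, and from this derives a contradiction. Your inducing/lifting strategy follows the heavier template of \cite{S,AMO} and would require the fiberwise measurability verifications you yourself flag; none of that machinery appears in the paper's argument.
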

	
	As a corollary of our main result concerns of the equivalence between the classes of random zooming potentials and random hyperbolic potentials.
	
	\begin{corollary}\label{B}
		We have that every random zooming potential is also a random hyperbolic potential. These two classes of potentials are then equivalent.
	\end{corollary}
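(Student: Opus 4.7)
The plan is to prove the non-trivial direction, namely that every random zooming potential $\phi$ is also a random hyperbolic potential, since the converse is already established just above the statement. Concretely, I must show $P_{F|T}(\phi,\Lambda^c) < P_{F|T}(\phi,\Lambda) = P_{F|T}(\phi)$. The first preliminary is to verify that $\Lambda$ is forward $F$-invariant: if $n$ is a zooming time for $(w,x)$ via a pre-ball $V_n(w,x)$, then $n-1$ is a zooming time for $F(w,x)=(T(w),f_w(x))$ via the pre-ball $f_w(V_n(w,x))$, so the frequency of zooming times is $F$-invariant up to a $1/n$ error. Consequently both $\Lambda$ and $\Lambda^c$ are forward invariant, so Theorem \ref{VP} applies to each of them.

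For the equality $P_{F|T}(\phi,\Lambda) = P_{F|T}(\phi)$, I invoke Theorem \ref{A} to produce an equilibrium state $\mu$ for $\phi$; this $\mu$ is automatically a random zooming measure, so $\mu(\Lambda)=1$. Theorem \ref{VP} then yields
\[
P_{F|T}(\phi,\Lambda) \ \geq \ h_\mu(F|T) + \int \phi\, d\mu \ = \ P_{F|T}(\phi),
\]
and since trivially $P_{F|T}(\phi,\Lambda) \leq P_{F|T}(\phi)$, equality follows.

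For the strict inequality $P_{F|T}(\phi,\Lambda^c) < P_{F|T}(\phi)$, I would use the matching upper bound
\[
P_{F|T}(\phi,\Lambda^c) \ = \ \sup_{\eta(\Lambda^c)=1}\Bigl\{ h_\eta(F|T) + \int\phi\, d\eta \Bigr\},
\]
i.e., the full Pesin--Pitskel-type variational principle on the forward invariant set $\Lambda^c$, drawn from \cite{SSV}. Any $F$-invariant probability $\eta$ with $\eta(\Lambda^c)=1$ has $\eta(\Lambda)=0<1$, hence $\eta \in \mathcal{Z}^c$. Plugging into the defining inequality \eqref{equaexpandpoten1} of a random zooming potential, together with the usual Variational Principle (Theorem \ref{thprinvaria}), gives
\[
P_{F|T}(\phi,\Lambda^c) \ \leq \ \sup_{\eta\in\mathcal{Z}^c}\Bigl\{h_\eta(F|T)+\int \phi\, d\eta\Bigr\} \ < \ \sup_{\mu\in\mathcal{Z}}\Bigl\{h_\mu(F|T)+\int \phi\, d\mu\Bigr\} \ = \ P_{F|T}(\phi),
\]
which completes the argument.

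The main obstacle is the upper bound on $P_{F|T}(\phi,\Lambda^c)$: Theorem \ref{VP} as stated only provides the $\geq$ half of the noncompact variational principle, whereas the argument above needs the matching $\leq$ inequality. I expect this to be available in \cite{SSV} (and this seems to be the reason that reference is cited in the preliminaries), but its precise form in the random setting, together with the forward invariance check for $\Lambda$ and $\Lambda^c$, are the two technical points that one must be careful about.
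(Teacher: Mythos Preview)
Your direct approach is sound and you have honestly flagged its one real dependency: to bound $P_{F|T}(\phi,\Lambda^c)$ from above by the free energy over $\mathcal{Z}^c$ you need the $\leq$ half of the Pesin--Pitskel variational principle on the noncompact set $\Lambda^c$, whereas Theorem~\ref{VP} only records the $\geq$ half. Your forward-invariance check for $\Lambda$ and $\Lambda^c$ and the use of Theorem~\ref{A} to get $P_{F|T}(\phi,\Lambda)=P_{F|T}(\phi)$ are both fine.

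The paper, however, does \emph{not} argue this way. It takes an indirect route: it observes that $\mathcal{HP}$ and $\mathcal{ZP}$ are both open subsets of $\mathbb{L}^{1}_{\mathbb{P}}(\Omega,C^0(M))$ with $\mathcal{HP}\subset\mathcal{ZP}$, and then tries to conclude equality by showing that the two sets have the same closure. Concretely, it identifies $\overline{\mathcal{HP}}$ with the set of potentials satisfying the non-strict inequality $P_{F|T}(\phi,\Lambda^c)\le P_{F|T}(\phi,\Lambda)=P_{F|T}(\phi)$, and for any such $\phi$ it reruns the existence argument of Theorem~\ref{A} to produce a zooming equilibrium state, from which it infers membership in $\overline{\mathcal{ZP}}$ and hence $\overline{\mathcal{HP}}=\overline{\mathcal{ZP}}$.

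What the two approaches buy: your route is transparent and, modulo the upper-bound variational input from \cite{SSV}, gives the strict inequality for each individual $\phi$ with no detour. The paper's closure argument is designed to sidestep precisely that upper bound, replacing it by continuity of the pressure functionals in $\phi$; on the other hand it leans on two soft steps that are not fully justified in the text (that ``same closure'' forces equality of the open sets, and that the closure of $\mathcal{HP}$ is exactly the non-strict-inequality set). If you can cite the full random noncompact variational principle from \cite{SSV}, your argument is the cleaner of the two.
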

	
	\section{Proof of Theorem \ref{A}}
	The following results can be found in  \cite{AMO}. The first of them shows the expansivity of the dynamics on the fibers. In the Lemma \ref{lemaparti} the existence of a generating partition is given over the set of measures $\mu$ that are in  $\mathcal{Z}$, which is the set of random zooming measures on $\Omega \times  M$. Consequently, the entropy over these measurements is equal; to the entropy considering the generating partition. Finally, the upper semicontinuity is obtained over a measure $\mu_0$.

	\begin{lemma}
		Let $\epsilon > 0$ and define
		\[
		A_{\epsilon}(w,x) = \{y \mid d(f^{n}_{w}(x),f^{n}_{w}(y)) \leq \epsilon, \forall n \geq 0\}.
		\]
		
		For every $\epsilon \leq \delta$ we have
		\[
		A_{\epsilon}(w,x) = \{x\},
		\]
		$\mathbb{P}$-almost every $w \in \Omega$.
	\end{lemma}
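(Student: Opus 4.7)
The statement is fibrewise expansiveness, and is only meaningful on the random zooming set (otherwise no zooming times are available), so the plan is to fix $(w,x)\in\Lambda$, take $\epsilon\leq\delta$ and $y\in A_\epsilon(w,x)$, and show $y=x$. The summability condition $\sup_{r\in(0,1)}\sum_n\alpha_n(r)<\infty$ gives $\alpha_n(r)\to 0$ for every $r\in(0,1)$, so I may assume $\delta<1$ from the outset. Since $(w,x)\in\Lambda$ there is an infinite increasing sequence of $(\alpha,\delta)$-zooming times $n_1<n_2<\cdots$ for $(w,x)$, and each $n_k$ comes equipped with a pre-ball $V_{n_k}(w,x)$ on which $f^{n_k}_w$ is a homeomorphism onto $B_\delta(f^{n_k}_w(x))$.

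For each $k$, the inclusion $f^{n_k}_w(y)\in\overline{B}_\epsilon(f^{n_k}_w(x))\subset B_\delta(f^{n_k}_w(x))$ coming from $y\in A_\epsilon(w,x)$ and $\epsilon\leq\delta$ produces a unique point $y_k\in V_{n_k}(w,x)$ with $f^{n_k}_w(y_k)=f^{n_k}_w(y)$. Applying the zooming inequality to the pair $(y_k,x)\in V_{n_k}(w,x)$ gives
\[
d(y_k,x)\leq \alpha_{n_k}\bigl(d(f^{n_k}_w(y_k),f^{n_k}_w(x))\bigr)=\alpha_{n_k}\bigl(d(f^{n_k}_w(y),f^{n_k}_w(x))\bigr)\leq \alpha_{n_k}(\epsilon),
\]
which tends to $0$ as $k\to\infty$. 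Hence $y_k\to x$; it remains to identify $y$ with $y_k$ for large $k$, since that would force $d(y,x)\leq\alpha_{n_k}(\epsilon)\to 0$ and conclude $y=x$.

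The heart of the argument is this identification. Writing $V_{n_k}^{(j)}(w,x):=f^j_w(V_{n_k}(w,x))$ for $0\leq j\leq n_k$, the zooming property says $V_{n_k}^{(j)}(w,x)$ is a neighbourhood of $f^j_w(x)$ on which $f^{n_k-j}_{T^j(w)}$ is a homeomorphism onto the \emph{full} ball $B_\delta(f^{n_k}_w(x))$. I would then perform a backward induction on $j$ from $n_k$ down to $0$: at $j=n_k$ the inclusion $f^{n_k}_w(y)\in B_\delta(f^{n_k}_w(x))=V_{n_k}^{(n_k)}$ is trivial; assuming $f^{j+1}_w(y)\in V_{n_k}^{(j+1)}$, the inverse of $f_{T^j(w)}|_{V_{n_k}^{(j)}}$ supplies a canonical preimage in $V_{n_k}^{(j)}$, and the hypothesis $\epsilon\leq\delta$ combined with $d(f^j_w(y),f^j_w(x))\leq\epsilon$ forces $f^j_w(y)$ itself to be that preimage. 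Taking $j=0$ yields $y\in V_{n_k}(w,x)$, so $y=y_k$ by injectivity on $V_{n_k}(w,x)$.

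The main obstacle is exactly this backward descent. Because the framework allows critical sets and the random maps $f_w$ need not be globally injective, in principle $f^j_w(y)$ could be a parasitic preimage of $f^{j+1}_w(y)$ lying outside $V_{n_k}^{(j)}(w,x)$; the bound $\epsilon\leq\delta$ is used crucially to rule this out, since it keeps the shadowing orbit inside the tube of intermediate pre-balls where the homeomorphism is defined. Once this step is in place, the argument applies for $\mathbb{P}$-a.e.\ $w$ automatically, because the assumption $(w,x)\in\Lambda$ only requires positive frequency of zooming times, which holds for a full $\mathbb{P}$-measure set of fibres by the definition of a random zooming map.
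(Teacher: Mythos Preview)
Your core argument is the same as the paper's: pick infinitely many zooming times $n_i$ for $(w,x)\in\Lambda$, apply the zooming contraction at each $n_i$ to obtain $d(x,z)\leq\alpha_{n_i}(\epsilon)\to 0$. The paper's proof is essentially two lines and simply asserts the inequality $d(x,z)\leq\alpha_{n_i}(d(f^{n_i}_w(x),f^{n_i}_w(z)))$, implicitly taking for granted that any $z\in A_\epsilon(w,x)$ already lies in the pre-ball $V_{n_i}(w,x)$ where the contraction is defined.

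You are more careful than the paper here: you correctly isolate membership in $V_{n_k}(w,x)$ as the only nontrivial point and propose a backward induction along the intermediate images $V_{n_k}^{(j)}$. That is the right instinct, but your inductive step is still only asserted. Knowing $d(f^j_w(y),f^j_w(x))\leq\epsilon\leq\delta$ does not by itself force $f^j_w(y)\in V_{n_k}^{(j)}$, because $V_{n_k}^{(j)}$ has diameter at most $\alpha_{n_k-j}(2\delta)$, which is typically much smaller than $\epsilon$; so ``staying within $\epsilon$'' does not keep the orbit inside the tube. What actually makes this step work in the literature is extra structure on the pre-balls (e.g.\ taking $V_n(w,x)$ to be the connected component of $(f^n_w)^{-1}B_\delta(f^n_w(x))$ containing $x$, together with connectedness of $M$), which should be invoked explicitly. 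In summary: same strategy as the paper, more honest about the delicate point, but the gap you identify is not fully closed by your sketch---nor, to be fair, by the paper's own proof.
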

	\begin{proof}
		By equation \ref{frequency} we have that every point $(w,x) \in \Lambda$ possesses infinitely many $(\alpha,\delta)$-zooming times $n_{i} = n_{i}(w,x) \in \mathbb{N}$. For each $w \in \Omega$, set $\Lambda_{w} : = \{x \mid (w,x) \in \Lambda\}$. Then, $\mathbb{P}$-almost every $w \in \Omega$ we have $\mu_{w}(\Lambda_{w}) = 1$ and an infinity of $(\alpha,\delta)$-zooming times $\mu_{w}$-almost every $x \in \Lambda_{w}$. Also, for every $\epsilon \leq \delta$ we have that
		\[
		d(x,z) \leq \alpha_{n_{i}}(d(f^{n_{i}}_{w}(x),f^{n_{i}}_{w}(z))) \leq \alpha_{n_{i}}(\epsilon),
		\]
		because the contractions $\alpha_{n}$ are increasing functions. Also, since the following holds $\displaystyle \sup_{r \in (0,1)} \sum_{n=1}^{\infty}\alpha_{n}(r) < \infty$, we have that for $\epsilon \in (0,1)$ we obtain $\alpha_{n_{i}}(\epsilon) \to 0$. It implies that $d(x,z) = 0$ and we are done.
	\end{proof}
	
	Let $\mathcal{P}$ be a partition of $M$ in measurable sets with diameter less than $\delta$. From the above
	lemma, we state the following lemmas, which are useful for proving the main theorem and whose proofs are found in \cite{AMO}:
	
	\begin{lemma}\cite[Lemma 6.7]{AMO}
		\label{lemaparti}
		Let $\mathcal{P}$ be a partition of $M$ in measurable sets with diameter less that $\delta$. Then, $\mathcal{P}$ is a generating partition for every $\mu \in \mathcal{Z}$.
	\end{lemma}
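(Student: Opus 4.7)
The plan is to reduce the generating property to the previous lemma via the elementary observation that atoms of the refined partition are precisely the sets of points which share a common itinerary through elements of $\mathcal{P}$ under the fiber dynamics.

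Fix $\mu \in \mathcal{Z}$. For $\mathbb{P}$-a.e.\ $w$, the partition on the fiber $M_{w}$ induced by $\bigvee_{n=0}^{\infty}(f^{n}_{w})^{-1}(\mathcal{P})$ has as its atom through $x$ the set
\[
\mathcal{P}^{\infty}_{w}(x)=\{y\in M \ :\ f^{n}_{w}(y)\text{ and }f^{n}_{w}(x)\text{ lie in the same element of }\mathcal{P} \text{ for all } n\ge 0\}.
\]
The first step is to note that since every element of $\mathcal{P}$ has diameter strictly less than $\delta$, membership of $y$ in $\mathcal{P}^{\infty}_{w}(x)$ forces $d(f^{n}_{w}(x),f^{n}_{w}(y))\le\delta$ for every $n\ge 0$, i.e.\ $y\in A_{\delta}(w,x)$ in the notation of the preceding lemma.

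The second step is to invoke the hypothesis $\mu\in\mathcal{Z}$, which gives $\mu(\Lambda)=1$, so by disintegration $\mu_{w}(\Lambda_{w})=1$ for $\mathbb{P}$-a.e.\ $w$. On this full-measure set, the preceding lemma applied with $\epsilon=\delta$ yields $A_{\delta}(w,x)=\{x\}$. Combining with the inclusion above gives $\mathcal{P}^{\infty}_{w}(x)=\{x\}$ for $\mu_{w}$-a.e.\ $x$ and $\mathbb{P}$-a.e.\ $w$.

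The final step is to translate this pointwise shrinking of atoms into the generating property. In the random setting, $\mathcal{P}$ is generating for $\mu$ precisely when, for $\mathbb{P}$-a.e.\ $w$, the refinement $\bigvee_{n=0}^{\infty}(f^{n}_{w})^{-1}(\mathcal{P})$ generates the Borel $\sigma$-algebra of $M$ modulo $\mu_{w}$-null sets; since each element of $\mathcal{P}$ is Borel measurable and the atoms reduce to single points on a $\mu_{w}$-full measure set, this criterion is satisfied, concluding the proof. The only potential subtlety is the measurability/disintegration bookkeeping needed to pass from the $\mu$-a.e.\ statement on $\Omega\times M$ to the fiberwise statement for $\mathbb{P}$-a.e.\ $w$, but this is routine and is already implicit in the disintegration $d\mu(w,x)=d\mu_{w}(x)\,d\mathbb{P}(w)$ recalled in the preliminaries.
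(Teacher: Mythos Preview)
Your proposal is correct and follows exactly the approach the paper indicates: the paper does not spell out its own proof here but explicitly states that this lemma follows from the preceding expansivity lemma (with the full argument deferred to \cite{AMO}), and your reduction of the atom $\mathcal{P}^{\infty}_{w}(x)$ to a subset of $A_{\delta}(w,x)$ and then to $\{x\}$ via that lemma is precisely this route.
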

	
	\begin{corollary}\cite[Lemma 6.8]{AMO}
		\label{corollary1}
		For every $\mu \in \mathcal{Z}$ we have $h_{\mu}(f) = h_{\mu}(F,\mathcal{P})$.
	\end{corollary}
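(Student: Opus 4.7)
The plan is to deduce this equality immediately from Lemma \ref{lemaparti} by invoking the random Kolmogorov--Sinai theorem. Recall that by definition
\[
h_{\mu}(F|T) = \sup_{\xi} h_{\mu}(F|T;\xi),
\]
where the supremum is taken over all finite measurable partitions of $M$. The inequality $h_{\mu}(F|T;\mathcal{P}) \leq h_{\mu}(F|T)$ is therefore automatic, so the work lies in the reverse direction. Lemma \ref{lemaparti} tells us that the iterates $\bigvee_{j=0}^{n-1} (f_w^{j})^{-1}\mathcal{P}$ separate points on $\mu_w$-a.e.\ fiber, which is precisely the generating property one needs.

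The key step is then to apply the random version of the Kolmogorov--Sinai theorem, which asserts that a generating partition with finite entropy realizes the supremum in the definition of fiber entropy. Concretely, one writes
\[
h_{\mu}(F|T;\mathcal{P}) = \lim_{n \to \infty} \frac{1}{n} \int_{\Omega} H_{\mu_w}\Bigl(\bigvee_{j=0}^{n-1} (f_w^{j})^{-1}\mathcal{P}\Bigr)\, d\mathbb{P}(w),
\]
and for any other finite partition $\xi$ of $M$ one controls $h_{\mu}(F|T;\xi)$ by $h_{\mu}(F|T;\mathcal{P})$ plus a conditional entropy term $H_{\mu}(\xi \mid \bigvee_{j=0}^{\infty} (f_w^{j})^{-1}\mathcal{P})$. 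By Lemma \ref{lemaparti}, this conditional entropy vanishes, yielding $h_{\mu}(F|T;\xi) \leq h_{\mu}(F|T;\mathcal{P})$ for every finite $\xi$, and taking the supremum over $\xi$ gives the claimed equality.

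The main technical obstacle is ensuring the Kolmogorov--Sinai argument transfers cleanly to the skew-product setting: one needs measurability of $w \mapsto H_{\mu_w}(\cdot)$ and a dominated convergence argument to interchange the limit with the integral over $\Omega$. Since $\mathcal{P}$ is a finite partition, the uniform bound $H_{\mu_w}(\mathcal{P}) \leq \log \#\mathcal{P}$ makes dominated convergence immediate, so the argument reduces to the fiberwise application of the classical Kolmogorov--Sinai theorem to $(f_w, \mu_w)$ almost surely. As the authors note, the full verification is carried out in \cite{AMO}, so in this paper the proof amounts to citing that reference.
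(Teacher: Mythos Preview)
Your proposal is correct and matches the paper: the paper gives no independent proof but simply cites \cite[Lemma~6.8]{AMO}, and the argument you sketch (random Kolmogorov--Sinai applied via the generating property of $\mathcal{P}$ from Lemma~\ref{lemaparti}) is precisely the content of that reference. Your final sentence already acknowledges this, so there is nothing to add.
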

	
	\begin{lemma}\cite[Lemma 6.9]{AMO}
		The map $\mu \to h_{\mu}(F,\mathcal{P})$ is upper semi-continuous at $\mu_{0}$ measure such that $(\mu_{0})_{w}(\partial P) = 0$ for $\mathbb{P}$-almost everywhere $w \in \Omega$, $P \in \mathcal{P}$.
	\end{lemma}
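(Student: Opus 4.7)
My plan is to reduce the statement to the standard upper semicontinuity of the finite-partition entropy functional at measures that do not charge the partition boundary, and then to handle the fibered integration over $\Omega$ by a reverse-Fatou argument.

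First I would set
\[
a_n(\mu) := \int_\Omega H_{\mu_w}\!\Bigl(\bigvee_{j=0}^{n-1} f_w^{-j}\mathcal{P}\Bigr)\,d\mathbb{P}(w)
\]
and establish the subadditivity $a_{m+n}(\mu) \leq a_m(\mu) + a_n(\mu)$. This follows from the cocycle identity $\mathcal{P}_w^{(m+n)} = \mathcal{P}_w^{(m)} \vee f_w^{-m}\mathcal{P}_{T^m(w)}^{(n)}$, the pushforward relation $(f_w^m)_\ast \mu_w = \mu_{T^m(w)}$ provided by $F$-invariance, and the $T$-invariance of $\mathbb{P}$. By Fekete's lemma,
\[
h_\mu(F|T;\mathcal{P}) = \inf_{n\geq 1} \frac{a_n(\mu)}{n}.
\]
Since the pointwise infimum of upper semicontinuous functions is upper semicontinuous, it suffices to show that $\mu \mapsto a_n(\mu)$ is upper semicontinuous at $\mu_0$ for every fixed $n$.

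Second, I would check that the boundary hypothesis transfers to the refined random partition $\mathcal{P}_w^{(n)} := \bigvee_{j=0}^{n-1} f_w^{-j}\mathcal{P}$. The inclusion $\partial \mathcal{P}_w^{(n)} \subseteq \bigcup_{j=0}^{n-1} f_w^{-j}(\partial\mathcal{P})$, combined with $F$-invariance of $\mu_0$ and with $\mathbb{P}$ being its marginal, yields
\[
(\mu_0)_w\bigl(f_w^{-j}(\partial P)\bigr) = (\mu_0)_{T^j(w)}(\partial P) = 0
\]
for $\mathbb{P}$-a.e.\ $w$ and every $P\in\mathcal{P}$. Hence $(\mu_0)_w(\partial\mathcal{P}_w^{(n)}) = 0$ for $\mathbb{P}$-a.e.\ $w$.

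Third, for a sequence $\mu_k \to \mu_0$ in $\mathcal{M}_\mathbb{P}(F)$, I would invoke fiberwise the classical continuity of $\nu \mapsto H_\nu(\mathcal{Q})$ at measures with $\nu(\partial\mathcal{Q}) = 0$ (portmanteau together with the continuity of $-t\log t$), then pass along a subsequence on which the sample measures $(\mu_k)_w$ converge weakly to $(\mu_0)_w$ for $\mathbb{P}$-a.e.\ $w$, and apply reverse Fatou with the integrable majorant $H_{(\mu_k)_w}(\mathcal{P}_w^{(n)}) \leq n\log \#\mathcal{P}$ to obtain $\limsup_k a_n(\mu_k) \leq a_n(\mu_0)$. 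The delicate point, and what I anticipate to be the main obstacle, is precisely this transition from weak-$\ast$ convergence of $\mu_k$ on $\Omega\times M$ to pointwise-a.e.\ weak convergence of their disintegrations; this disintegration step is the technical heart of the argument and is carried out in \cite[Lemma 6.9]{AMO}, on which the present proof can ultimately rely.
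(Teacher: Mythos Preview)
The paper does not supply its own proof of this lemma: it is stated with an explicit citation to \cite[Lemma~6.9]{AMO}, and the surrounding text says that the proofs of this and the preceding lemma ``are found in \cite{AMO}''. So there is no in-paper argument to compare against; your sketch is already more than what the paper provides.

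Your outline is the standard one and is correct in substance: write $h_\mu(F|T;\mathcal P)=\inf_n a_n(\mu)/n$ via subadditivity, reduce to upper semicontinuity of each $a_n$ at $\mu_0$, transfer the zero-boundary hypothesis to the refined random partitions using $F$-invariance, and then combine the portmanteau-type continuity of $H_\nu(\mathcal Q)$ at measures not charging $\partial\mathcal Q$ with reverse Fatou and the uniform bound $n\log\#\mathcal P$. Two small comments. First, the ``subsequence'' step should be framed as: given any sequence $\mu_k\to\mu_0$, pass to a subsequence realizing the $\limsup$ of $a_n(\mu_k)$ and then to a further subsequence along which the disintegrations converge $\mathbb P$-a.e.; this is what actually yields $\limsup_k a_n(\mu_k)\le a_n(\mu_0)$. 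Second, the sentence where you defer the disintegration-convergence step back to \cite[Lemma~6.9]{AMO} is a bit circular, since that is precisely the lemma under discussion; it would be cleaner either to spell out the (short) argument---for each $\varphi$ in a countable dense subset of $C(M)$ the fiber integrals $w\mapsto\int\varphi\,d(\mu_k)_w$ converge in $L^1(\mathbb P)$, hence $\mathbb P$-a.e.\ along a diagonal subsequence---or simply to say, as the paper does, that the whole lemma is quoted from \cite{AMO}.
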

	
	Now, we are able to prove the Theorem \ref{A}.
	\begin{proof}
		Since the potential $\phi : \Omega \times M \to \mathbb{R}$ is random zooming, we can obtain a sequence of measures $\mu_{k} \in \mathcal{Z}$ such that
		\[
		h_{\mu_{k}}(f) + \int \phi d\mu_{k} \to P(\phi).
		\]
		
		Fix a partition $\mathcal{P}$ with diameter less than $\delta$ and for $w$-a.e., $\mu_{w}(\partial P) = 0$, for any $P \in \mathcal{P}$. By the above Corollary \ref{corollary1}, we have that $h_{\mu_{k}}(f)=h_{\mu_{k}}(F,\mathcal{P})$. Then, we obtain
		\[
		P(\phi) = \limsup_{k \to \infty} \bigg{\{} h_{\mu_{k}}(f) + \int \phi d\mu_{k}\bigg{\}}.
		\]
		
		We know that the map $\eta \to h_{\eta}(F,\mathcal{P})$ is upper semi-continuous over $\mathcal{Z}$ (also over $\overline{\mathcal{Z}}$). By compacity of the space of probabilities, we can suppose that there exists a measure $\mu \in \overline{\mathcal{Z}}$ such that 
		\[
		P(\phi) = \limsup_{k \to \infty} \bigg{\{} h_{\mu_{k}}(F,\mathcal{P}) + \int \phi d\mu_{k}\bigg{\}} \leq h_{\mu}(F,\mathcal{P}) + \int \phi d\mu \leq P(\phi).
		\]
		
		It means that $\mu$ is an equilibrium state for $\phi$.
	\end{proof}
	
	
	
	
	\begin{lemma}
		Suppose that the marginal $\mathbb{P}$ is ergodic and the map $F : \Omega \times M \to \Omega \times M$ is such that the set $\{f^{n}_{w}(x) \,\, | \,\, n \in \mathbb{N}\}$ is dense on $M$, for every $(w,x) \in \Lambda \cap M_{w}$,  $\mathbb{P}$-a.e. $w \in \Omega$. Then, all ergodic equilibrium states have same support.
	\end{lemma}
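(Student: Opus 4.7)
The plan is to argue by contradiction, reducing to ergodic equilibrium states and then exploiting the density of $\Lambda$-orbits together with the zooming geometry and the expansivity established at the start of this section.

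Since the functional $\mu\mapsto h_\mu(F|T)+\int\phi\,d\mu$ is affine on $\mathcal{M}_{\mathbb{P}}(F)$, the set of equilibrium states is convex. Ergodicity of $\mathbb{P}$ permits an ergodic decomposition of any $F$-invariant measure with marginal $\mathbb{P}$ into $F$-ergodic components, and affineness of the functional combined with the existence part of Theorem~\ref{A} shows that almost every component of an equilibrium state is itself an equilibrium state in $\mathcal{Z}$. Consequently, it suffices to prove that there is at most one $F$-ergodic equilibrium state inside $\mathcal{Z}$.

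Suppose for contradiction that $\mu_1\neq\mu_2$ are two such ergodic measures. They are mutually singular, so there is an $F$-invariant Borel set $E\subset\Omega\times M$ with $\mu_1(E)=1$ and $\mu_2(E)=0$; disintegration gives $(\mu_1)_w(E_w)=1$ and $(\mu_2)_w(E_w)=0$ for $\mathbb{P}$-a.e.\ $w$, where $E_w=\{x\in M:(w,x)\in E\}$. Picking a $\mu_1$-generic $(w,x)\in E\cap\Lambda$ with positive frequency of $(\alpha,\delta)$-random zooming times, the definition of zooming time provides neighborhoods $V_n(w,x)$ mapped homeomorphically by $f^n_w$ onto $B_\delta(f^n_w(x))$, whose diameters shrink to zero by the contraction property of $\alpha$. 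Combining this with the density hypothesis applied to $\mu_2$-generic orbits and the full-support assumption on $\mathbb{P}$, I would locate points of $E_w^c$ arbitrarily close to $x$, contradicting the fact that $(\mu_1)_w$-almost every point of $E_w$ is a density point. The existence of such density points is guaranteed by the generating partition of Lemma~\ref{lemaparti} together with the expansivity lemma at the start of this section, which together identify the $\sigma$-algebra generated by $\mathcal{P}$ under $F$ with the Borel $\sigma$-algebra modulo null sets.

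The main obstacle I expect is the precise execution of this pull-back. The pre-ball $V_n(w,x)$ lies in the fiber $\{w\}\times M$ while $B_\delta(f^n_w(x))$ lies in the fiber $\{T^n(w)\}\times M$, so translating the density of $\mu_2$-generic orbits into hits on $B_\delta(f^n_w(x))$ coming from within $E_{T^n(w)}^c$ requires a measurable selection of zooming times along $\mu_2$-typical orbits together with a joint application of the Birkhoff ergodic theorem for $F$ to both generic sets across a common full-$\mathbb{P}$-measure set of fibers. This is precisely where the ergodicity of $\mathbb{P}$ with full support and the $\mathbb{P}$-a.e.\ density of $\Lambda$-orbits in the hypothesis become indispensable.
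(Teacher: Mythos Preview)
Your density-point step does not yield a contradiction. Saying that $x$ is a $(\mu_1)_w$-density point of $E_w$ means $(\mu_1)_w(E_w\cap V)/(\mu_1)_w(V)\to 1$ along a shrinking basis $V\ni x$; it says nothing about the \emph{topological} presence of $E_w^c$ near $x$. Since $\mu_1\perp\mu_2$, the $\mu_2$-typical points you produce in $E_w^c$ near $x$ will form a $(\mu_1)_w$-null set, so their existence is perfectly compatible with $x$ being a density point. In other words, you are trying to contradict a measure-theoretic statement with a purely topological one. The obstacle you flag about matching fibers is real, but even if you overcome it the conclusion does not follow; you would additionally need to show that the $E_w^c$ points you pull back carry positive $(\mu_1)_w$-mass, and there is no mechanism here producing that.

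For comparison, the paper's argument is simpler and avoids zooming geometry and density points entirely. From the orbit-density hypothesis one gets $\Lambda\cap M_{w_0}\subset\bigcup_{i\ge 1} f^{-i}_{T^{i}(w_0)}(B_0)$ for any ball $B_0\subset M$; since both sample measures give full mass to $\Lambda\cap M_{w_0}$, each must charge some preimage, and the equivariance $(f_w)_*\mu_w=\mu_{T(w)}$ pushes this forward to $\mu_{T^{t}(w_0)}(B_0)>0$ and $\nu_{T^{t}(w_0)}(B_0)>0$ for all large $t$. Ergodicity of $\mathbb{P}$ upgrades this to $\mathbb{P}$-a.e.\ $w$, so both measures have full support $\Omega\times M$, and the paper concludes from this that two distinct ergodic measures cannot coexist. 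If you want to salvage your approach, you would need an argument that transfers $(\mu_2)_w$-mass (not just points) into the pre-balls $V_n(w,x)$ and relates it to $(\mu_1)_w$; nothing in the zooming setup alone gives that.
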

	\begin{proof}
		Let $\mu \neq \nu$ be two ergodic equilibrium states. Since $\mu(\Lambda) = 1 = \nu(\Lambda)$, because they are random zooming measures, we have that $\mu_{w}(\Lambda \cap M_{w})= 1$ and $ \nu_{w}(\Lambda \cap M_{w}) = 1$, $\mathbb{P}$-a.e. $w \in \Omega$. There exists $w_{0} \in \Omega$ such that $\mu_{w_{0}}(\Lambda \cap M_{w_{0}})= 1 = \nu_{w_{0}}(\Lambda \cap M_{w_{0}})$. We can also take $w_{0}$ such that the set $\{f^{n}_{w_{0}}(x) \,\, | \,\, n \in \mathbb{N}\}$ is dense on $M$. 
		
		Fix $w_{0} \in \Omega$. Given $\epsilon > 0$, Let $B_{0} \subset M$ be a ball with radius $\epsilon$. Take $(w_{0}, x_{0}) \in \Lambda \cap M_{w_{0}}$. There exists $n_{0} \in \mathbb{N}$ such that
		\[
		f^{n_{0}}_{w_{0}}(x_{0}) \in B_{0} \subset M_{T^{n_{0}}(w_{0})} \implies x_{0} \in f^{n_{0}}_{T^{n_{0}}(w_{0}))^{-1}}(B_{0}) \subset M_{w_{0}}.	
		\]
		It means that
		\[
		\Lambda \cap M_{w_{0}} \subset \bigcup_{i=1}^{\infty} f^{i}_{T^{i}(w_{0}))^{-1}}(B_{0}).
		\]
		Since $\mu_{w_{0}}(\Lambda \cap M_{w_{0}})= 1 = \nu_{w_{0}}(\Lambda \cap M_{w_{0}})$, there exist $r,s \in \mathbb{N}$ such that
		\[
		\mu_{w_{0}}(f^{r}_{T^{r}(w_{0}))^{-1}}(B_{0}))> 0, \nu_{w_{0}}(f^{s}_{T^{s}(w_{0}))^{-1}}(B_{0}))> 0,
		\]
		which implies, by invariance, that
		\[
		\mu_{T^{r}(w_{0})}(B_{0})> 0, \nu_{T^{s}(w_{0})}(B_{0})> 0 \implies \mu_{T^{t}(w_{0})}(B_{0})> 0, \nu_{T^{t}(w_{0})}(B_{0})> 0, t \geq \text{lcm}\{r,s\}.
		\]
		We claim that  $\mathbb{P}(\{w \in \Omega \,\, | \,\, \mu_{w}(B_{0}) > 0, \nu_{w}(B_{0}) > 0\}) = 1$.  In fact, this set is positively invariant. Since $\mathbb{P}$ is ergodic, we have $\mathbb{P}(\{w \in \Omega \,\, | \,\, \mu_{w}(B_{0}) > 0, \nu_{w}(B_{0}) > 0\}) = 0,1$. If $\mathbb{P}(\{w \in \Omega \,\, | \,\, \mu_{w}(B_{0}) > 0, \nu_{w}(B_{0}) > 0\}) =0$, we would have 
		\[
		w_{0} \in \bigcup_{i=1}^{\infty}T^{-i}(\{w \in \Omega \,\, | \,\, \mu_{w}(B_{0}) > 0, \nu_{w}(B_{0}) > 0\}) \,\, \mathbb{P}-\text{a.e.} \,\, w_{0} \in \Omega \,\, (*),
		\]
		and the union in $(*)$ with zero measure $\mathbb{P}$, by invariance. This is not possible and we obtain $\mathbb{P}(\{w \in \Omega \,\, | \,\, \mu_{w}(B_{0}) > 0, \nu_{w}(B_{0}) > 0\}) = 1$. So, $\mu(A \times B_{0}) > 0, \,\, \nu(A \times B_{0}) > 0$ for every set $A \subset \Omega$ such that $\mathbb{P}(A) > 0$ and we obtain $\text{supp}(\mu) = \text{supp}(\nu)$. 
	\end{proof}
	
	\begin{lemma}
		If there exists a fixed point $y_{0} = (w_{0}, x_{0}) \in \Lambda \cap S$, we have uniqueness of equilibrium states.
	\end{lemma}	
    \begin{proof}
     By \cite{OV}[Lemma 4.3.3] for every collection of ergodic probability measures $\{\eta_{\lambda}\}_{\lambda}$ there exists a pairwise disjoint collection of invariant subsets $P_{\lambda} \subset \Omega \times M$ such that $\eta_{\lambda}(P_{\lambda})= 1$. Let $\{\eta_{\lambda}\}_{\lambda}$ be the collection of ergodic equilibrium states of $\phi$. 
     
     Consider the following pairwise disjoint collection of invariant subsets $Q_{\lambda} : = P_{\lambda} \cap \Lambda \cap S$. We have $\eta_{\lambda}(Q_{\lambda}) = 1$ because $\eta_{\lambda}$ is a random zooming measure.
     
     Let $\Lambda\cap S = \bigcup \Lambda_{\lambda}$ be a decomposition into pairwise disjoint invariant subsets such that $\eta_{\lambda}(\Lambda_{\lambda}) = 1$. We have the fixed point $y_{0} \in \Lambda_{\lambda_{0}}$. If $\Lambda\cap S = \bigcup \Lambda_{\lambda}'$ is another decomposition, we have $y_{0} \in \Lambda_{\lambda_{0}'}'$.
     
     We have $\eta_{\lambda_{0}}(\Lambda_{\lambda_{0}} \cap \Lambda_{\lambda_{0}}') = 1$ and $\eta_{\lambda_{0}'}(\Lambda_{\lambda_{0}'} \cap \Lambda_{\lambda_{0}'}') = 1$. By considering the following decomposition $\Lambda\cap S = \bigcup (\Lambda_{\lambda} \cap \Lambda_{\lambda}')$, we have $y_{0} \in \Lambda_{\lambda''} \cap \Lambda_{\lambda''}'$. Once it holds that $y_{0} \in \Lambda_{\lambda_{0}} \cap \Lambda_{\lambda_{0}'}'$, we have $\lambda'' = \lambda_{0} = \lambda_{0}'$. So, for every decomposition $\Lambda\cap S = \bigcup \Lambda_{\lambda}$ we obtain $y_{0} \in  \Lambda_{\lambda_{0}}$. By taking the following decomposition,
     \[
     \Lambda\cap S = Q_{\lambda_{0}} \cup \bigcup_{\lambda \neq \lambda_{0}} \Lambda_{\lambda},
     \]
     we obtain $y_{0} \in Q_{\lambda_{0}}$. Hence, either we have uniqueness or we can decompose $\Lambda \cap S$ and  $\eta_{\lambda_{0}} = \delta_{y_{0}}$, the Dirac measure supported at $y_{0}$ and it is an equilibrium state of $\phi$. But all the ergodic equilibrium states have the same support $S$. In the case of $\delta_{y_{0}}$ being an equilibrium state, we obtain $S = \{y_{0}\}$ and all equilibrium state coincide with $\delta_{y_{0}}$. In any case, we have uniqueness. The Lemma is proved.
    \end{proof}

	\section{Proof of Corollary \ref{B}}
	
	In order to prove Corollary \ref{B}, we observe that both the sets $\mathcal{ZP}$ and $\mathcal{HP}$ of random zooming and random hyperbolic potentials, respectively, are open. Since we have that $\mathcal{HP} \subset \mathcal{ZP}$, we are done if we show that both open sets have same closure.
	
	We begin by observing that the closure of the random hyperbolic potentials is the set of potentials $\phi  \in \mathbb{L}^{1}_{\mathbb{P}}(\Omega,C^0(M))$ such that the following inequality holds:
	\begin{equation}
		\label{equationradhyperpoten}
		P_{f}(\phi,\Lambda^{c}) \leq P_{f}(\phi,\Lambda) = P_{f}(\phi).
	\end{equation} 
	
	In this case, for the potential $\phi : \Omega \times M \to \mathbb{R}$, we can also obtain a sequence of measures $\mu_{k} \in \mathcal{Z}$ such that
	\[
	h_{\mu_{k}}(f) + \int \phi d\mu_{k} \to P(\phi).
	\]
	It also provides some equilibrium state $\mu_{0}$ which is a random zooming measure. So, we have the equality:
	\[
	P(\phi) = h_{\mu_{0}}(f) + \int \phi d\mu_{0}.
	\]
	It implies that the potential $\phi$ is in the closure of the set $\mathcal{ZP}$ and $\overline{\mathcal{ZP}} \subset \overline{\mathcal{HP}}$. Since we have that $\overline{\mathcal{HP}} \subset \overline{\mathcal{ZP}}$, we obtain $\overline{\mathcal{HP}} = \overline{\mathcal{ZP}}$ and the proof is done.
	
	\section{Examples}
	
	In this section, we give examples of random zooming systems, as the random non-uniformly expanding maps so-called random Viana maps and construct zooming potentials. Random non-uniformly expanding maps are examples of random zooming systems when the contractions are exponential. In this case, random zooming times are called random hyperbolic times.
	
	We begin by constructing random zooming potentials for random maps which has zooming fixed points. The Dirac probability supported on this point will be the equilibrium state. After that, we show that the null potentials is random zooming, if we can find a potential with an equlibrium state which is not a measure of maximal entropy at the same time. Finally, we presente the class of random Viana maps and give an example of random zooming system which does not have exponential zooming contractions. 
	
	\subsection{Random Zooming Potentials}\label{fixed} Let $F$ be an random zooming map with a critical set such that the random zooming set $\Lambda$ contains a fixed point $x_{0}$. We will construct a random zooming potential, that is, a potential $\varphi$ such that
	
	\begin{equation*}
		\label{equaexpandpoten}
		\sup_{\nu \in \mathcal{Z}^{c}}\bigg{\{} h_{\nu}(f) + \int \varphi d\nu \bigg{\}} < \sup_{\mu \in \mathcal{Z}}\bigg{\{} h_{\mu}(f) + \int \varphi d\mu \bigg{\}},
	\end{equation*}
	
	where $\mathcal{Z}$ is the set of the zooming measures.
	
	Since $x_{0} \in \Lambda$ is a fixed point, the Dirac probability $\delta_{x_{0}}$ at $x_{0}$  is an invariant zooming measure.
	
	Let $\varphi \geq 0$ be a continuous potential on the fibers such that $\max \varphi = \varphi(x_{0}) > 0$ and it is integrable with respect to the marginal probability (for example, we can define it as constant on the fibers). We have that
	\[
	\int \varphi d\nu < \varphi(x_{0}) = \int \varphi d\delta_{x_{0}}, \forall \nu \in \mathcal{Z}^{c}.
	\]
	
	We assume that
	\[
	\sup_{\nu \in \mathcal{Z}^{c}}\bigg{\{} \int \varphi d\nu \bigg{\}} < \varphi(x_{0}).
	\]
	
	So, we can find $k > 0$ such that
	\[
	\int k\varphi d\delta_{x_{0}} - \sup_{\nu \in \mathcal{Z}^{c}}\bigg{\{} \int k\varphi d\nu \bigg{\}} > 2h(F) \implies \int k\varphi d\delta_{x_{0}} -  \int k\varphi d\nu > 2h(F), \forall \nu \in \mathcal{Z}^{c}  
	\]
	where $h(F) > 0$ is the topological entropy. It implies that
	\[
	h_{\nu}(f) + \int k\varphi d\nu \leq h(F) + \int k\varphi d\nu < 2h(F) + \int k\varphi d\nu < \int k\varphi d\delta_{x_{0}},
	\]
	that is,
	\[
	\sup_{\nu \in \mathcal{Z}^{c}}\bigg{\{} h_{\nu}(f) + \int k\varphi d\nu \bigg{\}} < \int k\varphi d\delta_{x_{0}} \leq \sup_{\mu \in \mathcal{Z}}\bigg{\{} h_{\mu}(f) + \int k\varphi d\mu \bigg{\}}.
	\]
	Then, the potential $\psi = k\varphi$ is random zoooming.

	Also, in general, if there exist a potential $\varphi$ and a measure $\mu \in \mathcal{Z}$, such that
	\[
	\sup_{\nu \in \mathcal{Z}^{c}}\bigg{\{} \int \varphi d\nu \bigg{\}} < \int \varphi d\mu,
	\]
	we can find $k > 0$ such that
	\[
	\int k\varphi d\mu - \sup_{\nu \in \mathcal{Z}^{c}}\bigg{\{} \int k\varphi d\nu \bigg{\}} > 2h(F) \implies \int k\varphi d\mu -  \int k\varphi d\nu > 2h(F), \forall \nu \in \mathcal{Z}^{c}  
	\]
	where $h(F) > 0$ is the topological entropy. It implies that
	\[
	h_{\nu}(f) + \int k\varphi d\nu \leq h(F) + \int k\varphi d\nu < 2h(F) + \int k\varphi d\nu < \int k\varphi d\mu,
	\]
	that is,
	\[
	\sup_{\nu \in \mathcal{Z}^{c}}\bigg{\{} h_{\nu}(f) + \int k\varphi d\nu \bigg{\}} < \int k\varphi d\mu \leq \sup_{\mu \in \mathcal{Z}}\bigg{\{} h_{\mu}(f) + \int k\varphi d\mu \bigg{\}}.
	\]
	Then, the potential $\psi = k\varphi$ is random zooming.

	\subsection{The Null Potential} We will show that for random maps and potentials for which there exist equilibrium states we have that the null potential is random zooming and, as a consequence, potentials with low variation are also random zooming. It is a consequence of the fact that the set of random zooming potentials is open.
	
	In order to show it, we use the existence of equilibrium states which are zooming measures. 
	
	We recall that $\mathcal{Z}$ denotes the set of zooming invariant measures. Assume, by contradiction, that
	\[
	\sup_{\mu \in \mathcal{Z}}\{h_{\mu}(f)\} \leq \sup_{\nu \in \mathcal{Z}^{c}}\{h_{\nu}(f)\}.
	\]
	Let $\phi_{0}$ be a random zooming potential which has equilibrium states. There exists an ergodic equilibrium state $\mu \in \mathcal{Z}$. We have that $\mu$ is also an equilibium state for $\phi: = \phi_{0} - P(\phi_{0})$ which implies that $P(\phi) = 0$ and $h_{\mu}(f) + \int \phi d\mu = 0$ and $-h_{\mu}(f) = \int \phi d\mu \leq 0$. Assume that there exists $\nu_{0} \in \mathcal{Z}^{c}$ such that $h_{\mu}(f) < h_{\nu_{0}}(f)$. Since every equilibrium state is a zooming probability, we have that $\nu_{0}$ is not an equilibrium state for $\phi$ and
	\[
	h_{\nu_{0}}(f) + \int \phi d\nu_{0} < h_{\mu}(f) + \int \phi d\mu = 0,
	\]
	which implies that $\int \phi d\nu_{0} < \int \phi d\mu = -h_{\mu}(f)$ because $h_{\mu}(f) < h_{\nu_{0}}(f)$.  For every $t \in (0,1)$ and putting $\nu = (1-t)\mu + t\nu_{0}$ we still have that $h_{\mu}(f) < h_{\nu}(f)$ (we observe that $h_{\nu}(f) = (1-t)h_{\mu}(f) + th_{\nu_{0}}(f)$, because the entropy is an affine function, see \cite[Proposition 9.6.1]{OV} for the deterministic case and by using the Abramov-Rokhlin Formula we obtain it for the relative entropy). Let $\epsilon > 0$ and $t \in (0,1), n \in \mathbb{N}$ are chosen such that such that $0 < h_{\nu}(f) - h_{\mu}(f) -\epsilon/n$ and  
	\[ 
	\int \phi d\mu > \int \phi d\nu > \int \phi d\mu - \epsilon 
	\]
	so, 
	\[
	- \epsilon(n-1)/n < h_{\nu}(f) - h_{\mu}(f) -\epsilon < h_{\nu}(f) + \int \phi d\nu < 0,
	\]
	using that $\int \phi d\mu = -h_{\mu}(f)$ . It is possible because once $\mu$ is an equilibrium state we have $h_{\nu_{0}}(f) - h_{\mu}(f) < \int \phi d\mu - \int \phi d\nu_{0}$ and there exist $t \in (0,1), n \in \mathbb{N}$ such that
	
	\begin{align*}
		\epsilon/n < h_{\nu}(f) - h_{\mu}(f) &= t(h_{\nu_{0}}(f) - h_{\mu}(f)) \\
		& < t\bigg{(}\int \phi d\mu - \int \phi d\nu_{0}\bigg{)} = \int \phi d\mu - \int \phi d\nu < \epsilon\\
	\end{align*}
	We obtain a contradiction because $\nu \in \mathcal{Z}^{c}$ and $\epsilon > 0$ is arbitrary and the potential $\phi$ is zooming. It shows that we have
	\[
	\sup_{\nu \in \mathcal{Z}^{c}}\{h_{\nu}(f)\} \leq h_{\mu}(f).
	\]
	If $\mu_{0}$ is an equilibrium state which is not a measure of maximal entropy, then we have
	\[
	\sup_{\nu \in \mathcal{Z}^{c}}\{h_{\nu}(f)\} \leq h_{\mu_{0}}(f) < \sup_{\mu \in \mathcal{Z}}\{h_{\mu}(f)\} = h(f).
	\]
	Hence, the null potential is random zooming.

	\subsection{Random Viana maps}
	
	
	
	Although this class of systems is classic in the literature, the novelty here is the existence of equilibrium states and measure of maximal entropy, since the null potential is random zooming. 
	
	Deterministic Viana maps are presented in \cite{A,V}. Alves and Ara\'ujo in \cite{AlvesAraujo}, considering $f_0$ as Viana map and the functions $f\in C^2$ around of Viana map must have the same critical point set $\mathcal{C}$ and impose that
	\[
	Df_{t}(x)=Df_0(x) \quad \text{for every} \quad x\in M\setminus \mathcal{C} \ \ \text{and} \ \ t\in Y,
	\]
	where $Y$ metric space with probability measure $\nu$. They also consider that, in the case of maps with critical sets, they assume {\it a slow approximation of random orbits to the critical set}: given any small $\gamma > 0$ there is $\delta >0$ such that
	$$
	\limsup_{n\to +\infty}\dfrac{1}{n}\sum_{j=0}^{n-1}-\log dist_{\delta} (f_{\underline{t}}^{j}(x), \mathcal{C}) \le \gamma
	$$
	for  $ \nu^{\mathbb{N}}\times m$ almost every $(\underline{t},x) \in \Omega\times M$ for $\Omega = Y^{\mathbb{N}}$. Where $dist_{\delta}(x,\mathcal{C})$  is $\delta-$truncated distance from $x\in M$ to $\mathcal{C}$, $\underline{t}=(t_1,t_2,\dots)\in \Omega$ and the random orbit $(f_{\underline{t}}^{n}(x))_{n\geq 1}$ defined
	$$
	f_{\underline{t}}^{n}= f_{t_n}\circ \cdots \circ f_{t_1} \quad \text{for} \quad n\ge 1
	. $$
	
	Now, under these conditions, they show that an RDS non-uniformly expanding with critical points is obtained, through the Proposition 2.6 (see \cite{AlvesAraujo}, pag 35), while at the Proposition 2.3 (see \cite{AlvesAraujo}, pag 32) the existence of hyperbolic times is obtained. \\
	On the other hand, considering $T:\Omega \rightarrow \Omega$  the left shift  map and $\nu^{\mathbb{N}}$ ergodic product measure on $(T, \Omega)$, then we have 
	
	\begin{enumerate}
		\item  [i)] A measure $\mu \in \mathcal{Z}$ which is random zooming: since the deterministic Viana map has an expanding fixed point and the shift map also has fixed points, as can obtain a fixed point for the random Viana map which is a zooming point. Also, the Dirac probability supported on this fixed zooming point is a random zooming measure; 
		
		\item [ii)] A potential $\varphi: \Omega\times M \rightarrow \mathbb{R}$  for the random Viana map which is a zooming potential (see \ref{equaexpandpoten1}). In the subsection \ref{fixed} we constructed such a potential which is random zooming, for any  random map which has a zooming fixed point.
		
		\item[iii)] The null potential is random zooming and we have the existence of measures of maximal entropy.
	\end{enumerate}
	
	
	\subsection{Random Zooming but not Expanding} We can obtain an example of a random zooming system which does not have exponential contractions by considering the product of the deterministic example given in \cite{Pi1} and the shift map of finite type. We obtain an skew product with contractions on the fibers which are not exponential. We refer \cite{Pi1}, Example 9.14. We can see that for this example we have the contractions as
	\[
	\alpha_{n}(r) = \bigg{(}\frac{1}{1 + n\sqrt{r}}\bigg{)}^{2}r.
	\]
	For the random system, we consider it between fibers.
	
	
	

\end{document}